\def \fol {{\mathcal F}}
\def \F {{\mathcal F}}
\newtheorem{proposition}{Proposition}[section]
\newtheorem{definition}[proposition]{Definition}
\newtheorem{lema}[proposition]{Lemma}
\newtheorem{teo}[proposition]{Theorem}
\begin{document}

\title[Classification of holomorphic   foliations  on Hopf manifolds]
{Classification of holomorphic foliations of arbitrary codimension on Hopf manifolds}

\author{Antonio Marcos  Ferreira da Silva}

\address{Antonio Marcos Ferreira da Silva\\
Departamento de Ciências Exatas\\
Universidade Federal de Lavras\\
 37200 000 Lavras MG, Brazil} \email{ antoniosilva@dex.ufla.br}

\subjclass[2010]{Primary 32S65, 37F75, 32M25} \keywords{ Holomorphic foliations, Hopf manifolds}

\begin{abstract}
We classify nonsingular holomorphic foliations of arbitrary codimension on
certain Hopf manifolds. We prove that all holomorphic distribution of codimension $k$ on a generic Hopf manifold is induced by a mononial 
holomorphic $k-$form.

\end{abstract}
\maketitle
\section{Introduction}
Let $W=\mathbb{C}^n-\{0\}$, $ n\geq 2$, and $f(z_1, z_2,...,
z_n)=(\mu_1 z_1,\mu_2 z_2,...,\mu_n z_n)$ be a diagonal contraction 
in $\mathbb{C}^n$, where $0<|\mu_i|<1$ for
all $1\leq i\leq n$. The  quotient space
$X=W/<f>$ is a compact, complex manifold of dimension $n$ called of
Hopf manifold. When $\mu_1=\dots=\mu_{n}$ we say that $X$ is a classical Hopf manifold.
Classical Hopf manifolds were first studied by Heinz Hopf \cite{hopf} in 1948. Hopf showed that $X$ is difeomorphic to the product of odd spheres $S^1\times S^{2n-1}$ and has
a complex structure which is not Kälher. K. Kodaira\cite{ko} classified all Hopf surfaces, but the problem of classification of general Hopf 
manifolds it is an open problem. The   geometry and topology of  Hopf 
manifolds have been studied by several authors, see for instance, Dabrowski \cite{Da}, Haefliger \cite{Hae}, Ise \cite{Ise}, Wehler\cite{weh} etc.

We are interested on holomorphic foliations on Hopf manifolds. D. Mall in \cite{Ma1} using the Kodaira's classification of Hopf surfaces obtained the classification 
of nonsingular holomorphic foliations on Hopf surfaces. E. Ghys studied 
holomorphic foliations on homogeneous spaces, and as consequence obtained the classification of codimension one foliation on classical Hopf manifolds. In \cite{aca} and in the author's Ph.D thesis \cite{tesis} with the advisors A. Fern\'andez-P\'erez and 
M. Corr\^ea JR we classified holomorphic 
foliation of dimension and codimension one on certain Hopf manifolds. In this paper we want to complete the classification started in \cite{aca} and \cite{tesis},  therefore we are interested  on holomorphic foliations of codimension $k$ where $1<k<n-1$. We will consider  the following types of Hopf manifolds

\begin{definition} 
We say that
\begin{enumerate}
\item $X$ is\textbf{ classical } if $\mu=\mu_1=\ldots=\mu_n$.
\item $X$ is \textbf{generic} if  $0<|\mu_1|\leq|\mu_2|\leq\ldots\leq|\mu_n|<1$
 and there not exists non-trivial relation between the $\mu_i$'s in this way
$$\prod_{i\in A}\mu^{r_{i}}_{i}=\prod_{j\in B}\mu^{r_{j}}_{j},\quad r_i,r_j
\in\mathbb{N},\quad A\cap B=\emptyset,\quad A\cup B=\{1,2,\ldots,n\}.$$
\item $X$ is \textbf{intermediary} if $\mu_1=\mu_2=\ldots=\mu_r$, where 
$2\leq r\leq n-1$ and there not exists non-trivial relation between the $\mu_i$'s in this way
$$\prod_{i\in A}\mu^{r_{i}}_{i}=\prod_{j\in B}\mu^{r_{j}}_{j},
\quad r_i,r_j\in\mathbb{N},\quad A\cap B=\emptyset,\quad A\cup B=\{1,r+1,\ldots,n\}.$$
\end{enumerate}
\end{definition}
A line bundle $L$ on $X$ is the quotient of  $W\times \mathbb{C}$  by the
operation of a representation of the fundamental group
of  $X$,
$
  \varrho_{L}:   \pi_1(X)\simeq \mathbb{Z}  \longrightarrow   GL(1,\mathbb{C})  =  \mathbb{C}^*
$ in the following way
$$
\begin{array}{ccc}
  W\times \mathbb{C}& \longrightarrow &    W\times \mathbb{C}  \\
 (z,v)& \longmapsto  &     (f(z),  \varrho_{L}(1)v)\
\end{array}
$$

We write $L=L_b$ for the bundle induced by the representation $
\varrho_{L}(\gamma)$ with  $b=\varrho_{L}(1)$. Our main result is the following:
\begin{teo}\label{teo1}
Let $X$ be a Hopf manifold, $\dim X \geq 3$, and let
$\mathcal{F}$ be a nonsingular holomorphic distribution 
on $X$ of codimension $k$ ($1<k< n-1$) given
by a nonzero twisted
differential $k$-form $\omega\in H^0(X,\Omega^{k}_X\otimes
L_b)$ with coefficients in the line bundle
$L_b=\mathcal{N}:=\det(N_\F)$.

Then the following holds:
\begin{itemize}
\item[(i)] If $X$ is classical, then  $b =\mu^{m}$ with  $m 
\in \mathbb{N}$ and $m\geq
k$. Furthermore $\fol$ is induced by a polynomial $k$-form
$$\omega=\sum\limits _{i_1<\dots<i_k} g_{i_1\dots i_k} dz_{i_1,\dots ,i_k},$$ where $dz_{i_1,\dots ,i_k}$ denotes $dz_{i_1}\wedge \dots \wedge dz_{i_k}$ and $g_{i_1\dots i_k}$ are  homogeneous
polynomial of the same degree  $m-k$, such that
$\cap _{i_1<\dots<i_k} \{g_{i_1\dots i_k}=0\}=\{0\}$.

\item[(ii)]
 If $X$ is generic, then  $b =\mu_{i_1}\dots\mu_{i_k}$ for some $i_1<\dots <i_k$, 
and $\fol$ is induced by the $k$-form $\omega= dz_{i_1.\dots,i_k}$.

\item[(iii)] If $X$ is intermediary then $b=\mu^m\mu_{i_1}\dots \mu_{i_h}$ where $h\in \mathbb{N}\,, 0\leq h\leq k$, and $m+h=k$ or $m+h=k+1$. \\If $m+h=k$
then the foliation is induced by a holomorphic
 $k-$form of the type 
 \begin{center}
 $\omega=\sum\limits 
 _{j_{1}<\dots <j_{k-h}\leq r<i_1<\dots<i_h }  c^{j_{1}\dots j_{k-h}} 
 dz_{j_1,\dots,j_{k-h}, i_1,\dots,i_h}$ 
\end{center}
 If $m+h=k+1$ then distribution is induced by a linear holomorphic  $k-$ form.
\end{itemize}
\end{teo}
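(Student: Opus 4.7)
The plan is to lift $\omega$ to the universal cover $W=\C^n\setminus\{0\}$ and exploit equivariance with respect to the covering transformation $f$. First I would pull $\omega$ back to a holomorphic $k$-form $\tilde\omega$ on $W$ (using the triviality of $L_b$ on $W$); since $n\ge 3$, Hartogs' theorem extends $\tilde\omega$ to a holomorphic $k$-form on all of $\C^n$. The twisting of $\omega$ by $L_b$ translates into the functional equation $f^{*}\tilde\omega=b\cdot\tilde\omega$. Writing $\tilde\omega=\sum_{|I|=k}g_I\,dz_I$ and expanding $g_I=\sum_\alpha a_{I,\alpha}z^\alpha$, the equation becomes, for every $I=\{i_1<\cdots<i_k\}$ and every $\alpha\in\N^n$,
\begin{equation*}
a_{I,\alpha}\bigl(\mu^\alpha\mu_{i_1}\cdots\mu_{i_k}-b\bigr)=0.
\end{equation*}
All of the classification will be extracted from this equation using the relationless hypothesis on the $\mu_i$'s together with the nonsingularity of $\F$.

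For the classical case (i), $\mu_i=\mu$ for every $i$, so the equation collapses to $\mu^{|\alpha|+k}=b$; hence $b=\mu^m$ with $m\ge k$, each $g_I$ is homogeneous of degree $m-k$, and nonsingularity on $W$ is exactly $\bigcap_I\{g_I=0\}=\{0\}$. For the generic case (ii), multiplicative independence of the $\mu_i$'s forces $b$ to be a single squarefree monomial $\mu_{i_1}\cdots\mu_{i_k}$, and only the constant term $a_{I_0,0}$ with $I_0=\{i_1,\dots,i_k\}$ can be nonzero; thus $\tilde\omega=c\cdot dz_{I_0}$ for a constant $c\ne 0$.

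The intermediate case (iii) is where all the work lies. Using the relationless hypothesis on $\{\mu,\mu_{r+1},\dots,\mu_n\}$, I would write $b=\mu^m\prod_{i>r}\mu_i^{b_i}$ uniquely, and observe that the functional equation splits into $|\alpha|_r+|I\cap\{1,\dots,r\}|=m$ and $\alpha_i+[i\in I]=b_i$ for $i>r$, with $|\alpha|_r:=\alpha_1+\cdots+\alpha_r$. The second batch of conditions forces $b_i\in\{0,1\}$: if $b_{i_0}\ge 2$ for some $i_0>r$, then $z_{i_0}$ divides every $g_I$, so $\tilde\omega$ vanishes along $\{z_{i_0}=0\}\cap W$, contradicting nonsingularity. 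Hence $b=\mu^m\mu_{i_1}\cdots\mu_{i_h}$ with $r<i_1<\cdots<i_h$. To force $m+h\in\{k,k+1\}$, I would evaluate $\tilde\omega$ at the point $p\in W$ with $z_{i_1}=1$ and all other coordinates zero: each $g_I$ factors as a monomial in $\{z_i:i>r\}$ times a homogeneous polynomial of degree $m-|I\cap\{1,\dots,r\}|$ in $z_1,\dots,z_r$, and $g_I(p)\ne 0$ requires both the polynomial factor to be a nonzero constant and the monomial factor to survive at $p$; these two requirements leave only $I\cap\{r+1,\dots,n\}\in\bigl\{\{i_1,\dots,i_h\},\{i_2,\dots,i_h\}\bigr\}$ together with $m=|I\cap\{1,\dots,r\}|$, yielding $m+h=k$ and $m+h=k+1$ respectively.

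Once $b$ is pinned down, the shape of $\omega$ can be read off directly from the monomial conditions. When $m+h=k$, the only admissible $I$ are of the form $\{j_1<\cdots<j_{k-h}\}\cup\{i_1,\dots,i_h\}$ with $j_\ell\le r$, and each $g_I$ is constant, giving the expression in the statement. When $m+h=k+1$, the admissible $g_I$ are either homogeneous linear in $z_1,\dots,z_r$ (when $I\supseteq\{i_1,\dots,i_h\}$) or a scalar multiple of a single $z_{i_j}$ (when $I$ misses exactly one $i_j$), so $\omega$ is a linear holomorphic $k$-form. The hard part is the intermediate case: one must carefully match the arithmetic constraints coming from the equivariance equation with the geometric nonsingularity hypothesis at a well-chosen point in order to obtain both $b_i\le 1$ and $m+h\le k+1$.
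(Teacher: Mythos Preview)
Your overall strategy---lift to $W$, extend across the origin by Hartogs, and read off the monomial constraints from $f^*\tilde\omega=b\,\tilde\omega$---is exactly the paper's, and for parts (i) and (ii) the arguments are essentially identical.

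Where you diverge is in the intermediary case (iii). The paper proves (iii) by an exhaustive case analysis: it splits first on the value of $m$ (the cases $m=0$, $m=1$, $k=m>1$, and $2\le m\le k-1$), then on $r=n-t$, then on further parameters $\lambda,\gamma,a$, checking in each branch which $g_{i_1\dots i_k}$ can survive and when nonsingularity forces the remaining exponents $m_j$ to be $0$ or $1$. Your approach is considerably more economical: after writing $b=\mu^m\prod_{i>r}\mu_i^{b_i}$ and separating the equivariance equation into the two batches $|\alpha|_r+|I\cap\{1,\dots,r\}|=m$ and $\alpha_i+[i\in I]=b_i$ for $i>r$, you extract $b_i\in\{0,1\}$ from nonsingularity along the hyperplane $\{z_{i_0}=0\}$, and then the single test point $p=(0,\dots,0,z_{i_1}=1,0,\dots,0)$ forces $m+h\in\{k,k+1\}$ in one stroke. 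This replaces several pages of case-work with a direct argument and makes the structure of the answer transparent.

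Two small points to tidy up. First, your test point $p$ presupposes $h\ge 1$; when $h=0$ you should instead evaluate at any point with $z_1=\cdots=z_r=0$ (for instance $z_n=1$), which forces the homogeneous degree $m-k$ of each $g_I$ to be zero, hence $m+h=m=k$. Second, the statement also asserts $h\le k$; this does not follow from your test at $p$ alone, but it drops out immediately if you also evaluate at a point with $z_1=1$ and all other coordinates zero: nonsingularity there forces some $I$ with $I\supseteq\{i_1,\dots,i_h\}$ (so that the monomial factor is constant), whence $h\le |I|=k$. With these two additions your argument is complete and yields the same conclusion as the paper's case analysis by a shorter route.
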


When X is a generic Hopf manifold, we have the following   result:

\begin{teo}\label{generic}
All holomorphic distribution of codimension $k$  (possibly singular) on a generic Hopf manifold of dimension at least  three are induced by a monomial $k$-form.

\end{teo}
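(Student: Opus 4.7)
The plan is to pull the defining twisted $k$-form $\omega\in H^0(X,\Omega^k_X\otimes L_b)$ back to the universal cover $W=\mathbb{C}^n\setminus\{0\}$, extend it across the origin via Hartogs' theorem (which applies since $\dim X\geq 3$), and then exploit the equivariance $f^*\omega=b\omega$. Write $\omega=\sum_{|I|=k}a_I(z)\,dz_I$ with each $a_I$ entire on $\mathbb{C}^n$, and expand $a_I(z)=\sum_{\alpha\in\mathbb{N}^n}c^I_\alpha z^\alpha$ in convergent power series.

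Since $f$ is diagonal, $f^*(z^\alpha dz_I)=\mu^{\alpha+e_I}\,z^\alpha dz_I$ where $e_I=\sum_{i\in I}e_i$, and the equivariance yields the arithmetic resonance
\[
c^I_\alpha\neq 0\;\Longrightarrow\;\mu^{\alpha+e_I}=b.
\]
Next I would observe that the genericity hypothesis on $(\mu_1,\ldots,\mu_n)$ is equivalent to injectivity of $\gamma\mapsto\mu^\gamma$ on $\mathbb{N}^n$: any coincidence $\mu^\gamma=\mu^{\gamma'}$ with $\gamma\neq\gamma'$ would, after partitioning indices according to the sign of $\gamma_i-\gamma'_i$, produce a forbidden nontrivial relation $\prod_A\mu_i^{r_i}=\prod_B\mu_j^{r_j}$ with $A\sqcup B=\{1,\ldots,n\}$. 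Hence there is at most one $\beta\in\mathbb{N}^n$ satisfying $\mu^\beta=b$; every contributing pair $(\alpha,I)$ must then obey $\alpha+e_I=\beta$, and with $S:=\{j:\beta_j\geq 1\}$ this reduces $\omega$ to the finite sum
\[
\omega=\sum_{\substack{I\subseteq S\\|I|=k}}c^I\,z^{\beta-e_I}\,dz_I.
\]

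The remaining and most delicate step is to conclude that only one coefficient $c^{I_0}$ is non-zero. Here I would use the defining property of a distribution, namely local decomposability of $\omega$ off a set of codimension $\geq 2$. On the dense open set where $z_i\neq 0$ for every $i\in S$ one has $\omega=z^\beta\sum_I c^I\eta_I$ with $\eta_i=dz_i/z_i$, and decomposability translates into Pl\"ucker relations for the constants $c^I$ viewed in $\bigwedge^k\mathbb{C}^{|S|}$. These relations alone do not imply monomiality when $|S|>k$; the extra ingredient must be that $\omega$ extends as an honest polynomial form across each coordinate hyperplane $\{z_j=0\}$, $j\in S$. Restricting $\omega$ to such a hyperplane when $\beta_j=1$ kills every term with $j\notin I$ and yields a decomposable form involving only the $I$'s that contain $j$; I would then argue by induction on $|S|$, bottoming out at the nonsingular case $|S|=k$ handled in Theorem \ref{teo1}(ii), that the Pl\"ucker relations combined with these restriction conditions as $j$ ranges over $S$ force all $c^I$ but one to vanish. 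The main obstacle is making this combinatorial restriction---induction argument uniform, in particular when some $\beta_j\geq 2$, so that restriction to $\{z_j=0\}$ loses information and one instead needs a more refined Taylor-expansion argument along the divisor.
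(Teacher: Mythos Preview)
Your derivation up to the formula
\[
\omega=\sum_{\substack{I\subseteq S\\|I|=k}}c^I\,z^{\beta-e_I}\,dz_I
\]
is correct and is exactly the paper's argument: this is precisely equation~(\ref{generica}), and the paper's proof of Theorem~\ref{generic} consists of nothing more than invoking that equation. In the paper's terminology a \emph{monomial $k$-form} is a $k$-form each of whose coefficient functions $g_{i_1\dots i_k}$ is a monomial; the proof in the paper states this explicitly. So at the point where you obtain the displayed formula you are already done.

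The ``remaining and most delicate step'' you propose --- forcing all but one coefficient $c^{I_0}$ to vanish --- is not part of the theorem, and in fact it is false. Take $n=4$, $k=2$, $X$ generic, and $b=\mu_1\mu_2\mu_3$, so that $\beta=(1,1,1,0)$ and $S=\{1,2,3\}$. Then
\[
\omega=c_{12}\,z_3\,dz_1\wedge dz_2+c_{13}\,z_2\,dz_1\wedge dz_3+c_{23}\,z_1\,dz_2\wedge dz_3
\]
involves only $dz_1,dz_2,dz_3$, hence at every point lies in $\bigwedge^2\mathbb{C}^3$, where every $2$-covector is decomposable. Thus $\omega$ defines a (singular) codimension-$2$ distribution on $X$ for \emph{any} choice of $(c_{12},c_{13},c_{23})\neq(0,0,0)$, and generically all three coefficients are non-zero. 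Your proposed induction cannot succeed because the extra input you hoped to exploit --- holomorphic extension across the coordinate hyperplanes --- is already automatic for the polynomial form $\omega$ and therefore imposes no further constraint.

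In short: you have misread ``monomial $k$-form'' as ``a single monomial term''. Drop the last paragraph of your argument and the proof is complete and coincides with the paper's.
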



\section{Holomorphic foliations}
Let $X$ be a complex manifold. A (nonsingular) \emph{foliation} $\F$, of
dimension $k$, on $X$   is a subvector bundle $T\F \hookrightarrow
T_X$, of generic rank $k$, such that $[T\F,T\F]\subset T\F$.

There is a dual point of view where $\mathcal F$ is determined by a
subvector bundle $N^*_{ \mathcal F}$, of   rank $n-k$, of the
cotangent bundle $\Omega^1_X = T^* X$ of $X$. The  vector bundle
$N_{\mathcal{F}}^*$ is called \emph{conormal vector bundle} of
$\fol$. The involutiveness   condition  is
replace by: if $d$ stands for the exterior derivative
then $dN_{\mathcal{F}}^* \subset N_{\mathcal{F}}^* \wedge
\Omega^1_X$ at the level of local sections.
The normal bundle $N_{ \mathcal F}$ of $\mathcal F$ is defined as
the dual of $N_{\mathcal{F}}^* $. We have the following exact
sequence
\[
0 \to T\mathcal F \to TX \to N_{\mathcal F} \to 0 \, .
\]

The $(n-k)$-th wedge product of the inclusion $N^*_\F
\hookrightarrow \Omega^1_X$ gives rise to a nonzero twisted
differential $(n-k)$-form $\omega\in H^0(X,\Omega^{n-k}_X\otimes
\mathcal{N})$ with coefficients in the line bundle
$\mathcal{N}:=\det(N_\F)$, which is \emph{locally decomposable} and
\emph{integrable}.
By construction the tangent bundle of a Hopf manifold $X$ is given
by
$$
TX=\bigoplus_{i=1}^n L_{\alpha_i},
$$
where $L_{\alpha_i}$ is the tangent bundle of the foliation induced
by the canonical vector field $\frac{\partial}{ \partial z_i}$.

\section{Cohomology of line bundles on Hopf manifolds}
\par Let $\Omega^{p}_{X}$ be the sheaf of germs of holomorphic $p$-forms
on a Hopf manifold $X$. Denote by $\Omega^{p}_{X}(L_b):=\Omega^{p}_{X}\otimes L_b$ 
and by $\pi:W\to X$ the natural projection on $X$. Consider an open covering $\{U_{i}\}$
 of $X$ such that all sets open $U_{i}$ are Stein, simply-connected  
and $\tilde{U}_{i}:=\pi^{-1}(U_{i})$ is a disjoint union of Stein open sets on $W$. 
Since $\pi$ is surjective, we have $A=\{\tilde{U}_{i}\}$ is open covering of $W$, therefore, the definition of $X$ implies that

$$\displaystyle\tilde{U}_{i}=\cup_{r\in\mathbb{Z}}f^{r}(U_{i}).$$

\par Let $\varphi\in\Gamma(U_i,\Omega^{p}_{X}(L_b))$.  Then $\tilde{\varphi}=
\pi^{*}(\varphi)$ belongs to $\Gamma(\tilde{U}_i,\pi^{*}(\Omega^{p}_{X}(L_b)))
\cong\Gamma(\tilde{U}_i,\Omega^{p}_{W})$. Therefore we have an exact sequence of C\u{e}ch complexes
\begin{equation}\label{sequencia de Cech}
0\rightarrow \mathcal{C}^\textbf{.}(A, \Omega_{X} ^p(L_b))\stackrel{\pi^*}
{\longrightarrow}
\mathcal{C}^\textbf{.}(A, \Omega^p _W) \stackrel{bId-f^*}{\longrightarrow} 
\mathcal{C}^\textbf{.}(A, \Omega_W ^p)\longrightarrow 0.
\end{equation}
From (\ref{sequencia de Cech}) we derive the long exact sequence of cohomology
\begin{eqnarray*}
&&0\rightarrow H^0(X, \Omega_{X} ^p (L_b))
\rightarrow H^0(W, \Omega_W ^p)
 \stackrel{p_0}{\rightarrow} H^0(W, \Omega_W ^p) \rightarrow  H^1(X, \Omega_{X} ^p (L_b))\rightarrow
\cdots \end{eqnarray*}		
where
$p_0=b\cdot Id-f^*: H^0 (W, \Omega_W ^p)\rightarrow H^0 (W, \Omega_W ^p)$
and $W=\mathbb{C}^{n}-\{0\}$.
D. Mall proved in  \cite{Ma} the following result.
\begin{teo}[Mall \cite{Ma}] \label{teocoh}
If $X$ is a Hopf manifold of dimension $n\geq 3$ and $L_b$ is a line bundle on $X$. Then
\begin{eqnarray*}\label{equacao99}
&&dim H^0(X, \Omega_{X} ^p (L_b))=dim\,Ker(p_0)
\end{eqnarray*}
\end{teo}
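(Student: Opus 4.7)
The plan is to derive the dimension equality directly from the long exact sequence of cohomology groups displayed immediately above the theorem statement. Its opening terms read
\[
0\to H^0(X,\Omega_X^p(L_b))\to H^0(W,\Omega_W^p)\xrightarrow{p_0} H^0(W,\Omega_W^p),
\]
from which $H^0(X,\Omega_X^p(L_b))\cong\ker(p_0)$ and hence the equality of dimensions follows. The real work is to verify that the short sequence of C\u{e}ch complexes (\ref{sequencia de Cech}) is exact; once established, standard homological algebra yields the long exact sequence used here.

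Exactness at the first two spots is almost formal. Injectivity of $\pi^*$ follows because $\pi:W\to X$ is surjective, so a pullback that vanishes must itself be zero. Exactness in the middle encodes the equivariance characterization of descent: a holomorphic form $\tilde\varphi$ on an intersection $\tilde U_{i_0}\cap\cdots\cap\tilde U_{i_q}$ arises by pullback from a section of $\Omega_X^p\otimes L_b$ if and only if $f^*\tilde\varphi=b\,\tilde\varphi$, i.e.\ $\tilde\varphi\in\ker(bId-f^*)$.

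The main obstacle is surjectivity of $bId-f^*$ on the C\u{e}ch cochain groups. Here I would exploit the structural fact recalled in the paper that each $\tilde U_i$ decomposes as a disjoint union $\sqcup_{r\in\mathbb{Z}}f^r(U_i)$ of Stein translates permuted freely by $f$. A section $\varphi$ on $\tilde U_i$ is therefore a tuple $(\varphi_r)_{r\in\mathbb{Z}}$ of forms on the components $V_r:=f^r(U_i)$, and the equation $(bId-f^*)\psi=\varphi$ reduces to the family of recursions $b\psi_r-(f|_{V_r})^*\psi_{r+1}=\varphi_r$. Since $f|_{V_r}:V_r\to V_{r+1}$ is a biholomorphism and $b\neq 0$, one may choose $\psi_0$ freely and then propagate uniquely in both directions to produce a holomorphic solution on all of $\tilde U_i$; the same construction applies verbatim on every $q$-fold intersection, giving the short exact sequence in all C\u{e}ch degrees.

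To close, I would verify that the equality has content by showing that $\ker(p_0)$ is actually finite-dimensional. By Hartogs extension across $\{0\}$, $H^0(W,\Omega_W^p)$ is the space of holomorphic $p$-forms on $\mathbb{C}^n$, and in the monomial basis $z^\alpha\,dz_{i_1,\dots,i_p}$ the operator $f^*$ is diagonal with eigenvalues $\mu^\alpha\mu_{i_1}\cdots\mu_{i_p}$. Since every $|\mu_i|<1$, only finitely many pairs $(\alpha,I)$ can satisfy $\mu^\alpha\mu_I=b$, so $\ker(p_0)$ is spanned by the corresponding monomial forms and is finite-dimensional, which together with the isomorphism above gives the stated identity a concrete monomial description.
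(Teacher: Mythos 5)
Your argument is correct and follows essentially the same route as the paper, which states this result as a citation to Mall and justifies it only by displaying the short exact sequence of \v{C}ech complexes and the resulting long exact sequence, from whose first three terms the identification $H^0(X,\Omega_X^p(L_b))\cong \ker(p_0)$ is immediate. The details you supply --- injectivity of $\pi^*$, the equivariance characterization of descent, and the componentwise recursion over the decomposition $\tilde U_i=\sqcup_r f^r(U_i)$ proving surjectivity of $bId-f^*$ --- correctly fill in what the paper leaves implicit, and your closing finite-dimensionality observation matches the monomial computation the paper carries out afterwards in Lemma \ref{le1}.
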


To prove the Theorem \ref{teo1} we need proved the following lemma.

\begin{lema}\label{le1}
Let $X$ be a classical, generic or intermediary Hopf manifold of dimension 
$n\geq3$, and $L_b$ be a line bundle on $X$, with $b\in \mathbb{C}^*$. The following holds:
 \begin{enumerate}
\item[(i)] If $X$ is classical then $\dim\,H^0(X,\Omega_X ^k \otimes L_b)>0$ 
if, and only if, $b=\mu^m$, where $m\in \mathbb{N}, \, m\geq k$.

\item[(ii)] If $X$ is generic then $\dim\, H^0(X,\Omega_X ^k \otimes L_b)>0$ 
if, and only if, $$b=\mu_1 ^{m_1}\mu_2 ^{m_2}\dots \mu_n ^{m_n}$$ where $m_i\in 
\mathbb{N}$ and there exists $j_1,\dots,j_k\in \{1,\dots  ,n\}$, such that \mbox{$m_{j_0},\dots,m_{j_k}\geq 1$.}

\item[(iii)] If $X$ is intermediary then  $\dim\,H^0(X,\Omega_X ^k \otimes L_b)>0$ 
if, and only if, $$b=\mu_1^{m_1} \mu_2^{m_2}\dots \mu_n^{m_n}$$ with
$\mu_j \in \mathbb{N}$ for all $j=1,\dots ,n$, and:
\begin{enumerate}
 \item[0)] $m_1+m_2+\dots +m_r= 0$, and there exists $i_1,\dots ,i_k\geq 
r+1$ such that \\ $m_{i_1}\geq 1 ,\dots,  m_{i_k}\geq 1$ , or 
\item[1)] $m_1+m_2+\dots +m_r= 1$, and there exists $i_1,\dots ,i_{k-1}\geq
 r+1$ such that \\$m_{i_1}\geq 1,\dots, m_{i_{k-1}}\geq 1$, or 
\\.\\.\\.
\item [k-1)] $m_1+m_2+\dots +m_r= k-1$, and there exists $i_1\geq r+1$ such 
that $m_{i_1}\geq 1$, or 
\item [k)] $m_1+m_2+\dots +m_r\geq k$.
\end{enumerate}
\end{enumerate}
\end{lema}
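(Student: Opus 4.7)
The plan is to use Mall's Theorem \ref{teocoh} to replace the computation of $\dim H^0(X,\Omega_X^k\otimes L_b)$ by that of $\dim\ker(p_0)$, where $p_0=b\cdot\mathrm{Id}-f^{*}$ acts on $H^0(W,\Omega_W^{k})$. Everything then reduces to a purely arithmetic classification of the monomial $k$-forms on $W$ that $f^{*}$ scales by $b$.

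Since $n\geq 3\geq 2$, Hartogs' theorem extends every holomorphic $k$-form on $W=\mathbb{C}^{n}\setminus\{0\}$ to all of $\mathbb{C}^{n}$, so a general section may be written as
\[
\omega=\sum_{|I|=k}\Bigl(\sum_{\alpha\in\mathbb{N}^{n}}a_{\alpha,I}\,z^{\alpha}\Bigr)\,dz_{I}.
\]
A direct computation gives $f^{*}(z^{\alpha}dz_{I})=\mu^{\alpha}\mu_{I}\,z^{\alpha}dz_{I}$, with $\mu^{\alpha}=\prod_{j}\mu_{j}^{\alpha_{j}}$ and $\mu_{I}=\prod_{j\in I}\mu_{j}$, so the monomials $z^{\alpha}dz_{I}$ form a simultaneous eigenbasis of $f^{*}$. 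Consequently $p_{0}(\omega)=0$ iff for every pair $(\alpha,I)$ with $a_{\alpha,I}\neq 0$ the eigenvalue equation
\[
b=\mu^{\alpha}\mu_{I}
\]
holds. In particular $H^0(X,\Omega_X^{k}\otimes L_b)\neq 0$ iff at least one admissible pair $(\alpha,I)$ satisfies this equation, and it remains to unravel this condition in each case.

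For item (i), $\mu_{j}=\mu$ for all $j$, hence $\mu^{\alpha}\mu_{I}=\mu^{|\alpha|+k}$, so the condition becomes $b=\mu^{m}$ with $m=|\alpha|+k\geq k$. For item (ii), the genericity hypothesis implies that the exponents $m_{j}:=\alpha_{j}+\chi_{I}(j)$ are uniquely read off from $b=\prod\mu_{j}^{m_{j}}$; since $|I|=k$, this forces at least $k$ of the $m_{j}$ to be $\geq 1$. For item (iii), setting $\mu:=\mu_{1}=\cdots=\mu_{r}$ and using the absence of relations among $\mu,\mu_{r+1},\dots,\mu_{n}$, we write $b=\mu^{p}\prod_{j>r}\mu_{j}^{q_{j}}$ uniquely; with $s:=|I\cap\{1,\dots,r\}|$, the eigenvalue equation unfolds into $p=s+\sum_{j\leq r}\alpha_{j}$ and $q_{j}=\alpha_{j}+\chi_{I}(j)$ for $j>r$. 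Existence of a valid $(\alpha,I)$ thus amounts to choosing an $s\in\{0,\dots,k\}$ with $s\leq p$ together with $k-s$ indices $j>r$ at which $q_{j}\geq 1$; splitting according to $s=0,1,\dots,k-1$ and $s=k$ yields precisely the $k+1$ alternatives $0),1),\dots,k)$ of the statement.

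The main technical point is the bookkeeping in case (iii): the representation $b=\prod\mu_{j}^{m_{j}}$ is non-unique along the first $r$ indices, so the only intrinsic data extracted from $b$ are the sum $p=m_{1}+\cdots+m_{r}$ and the individual exponents $m_{r+1},\dots,m_{n}$. One must check that each choice of $s$ corresponds to an admissible redistribution of the first $r$ exponents, matching one of the cases in the lemma, and conversely that each case in the lemma is realised by some valid $(\alpha,I)$. Beyond this combinatorial check the argument is a direct comparison of eigenvalues of $f^{*}$.
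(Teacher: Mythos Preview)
Your argument is correct and follows essentially the same route as the paper: invoke Mall's theorem to reduce to $\ker p_{0}$, use Hartogs to write any $\omega\in H^{0}(W,\Omega_{W}^{k})$ as a power series in the monomial basis $z^{\alpha}dz_{I}$, observe that these are eigenvectors of $f^{*}$ with eigenvalue $\mu^{\alpha}\mu_{I}$, and then read off in each of the three cases which values of $b$ admit a solution of $b=\mu^{\alpha}\mu_{I}$. Your treatment of case~(iii) is in fact slightly more explicit than the paper's --- you isolate the invariant data $(p,q_{r+1},\dots,q_{n})$ of $b$ and parametrise the solutions by $s=|I\cap\{1,\dots,r\}|$ --- but the underlying computation is identical; just note that the $k+1$ alternatives in the lemma are indexed by the value of $p=m_{1}+\cdots+m_{r}$ rather than by $s$ (one takes $s=\min(p,k)$ to obtain the weakest constraint on the $q_{j}$'s).
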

\begin{proof}
By Theorem \ref{teocoh} we have $\dim H^0(X, \Omega_X ^{k}\otimes L_b)=\dim(ker\, p_0)$, where
\begin{center}
$p_0: H^0 (W, \Omega_ W ^{k})\longrightarrow H^0 (W, \Omega_ W ^{k})$, \quad$p_0=b
 Id-f^*$ \, and \, $W=\mathbb{C}^n-\{0\}$.
\end{center}

Let $\omega \in H^0(W,\Omega_W ^{k})$, then
$\omega= \sum\limits _{i_1<\dots<i_k}  g_{i_1\dots i_k} dz_{i_1,\dots, i_k}$. 
It  follows from Hartogs extension theorem that each $g_{i_1\dots i_k}$ 
can be represented by its Taylor series
\begin{center}
$g_{i_1\dots i_k}(z_1,z_2,\dots ,z_n)=\sum\limits _{\alpha \in \mathbb{N}^n} 
c_\alpha ^{i_1\dots i_k} z_1^{\alpha_1}z_2^{\alpha_2}\dots z_n ^{\alpha_n}$, for all $i=1,\dots ,n.$
\end{center}
 
Hence
\begin{equation}\label{equacao123}
p_0(\omega)=\sum\limits _{i_1<\dots <i_k} \sum\limits _{\alpha \in \mathbb{N}^n}
c_\alpha ^{i_1\dots i_k} (b-\mu_1 ^{\alpha_1}\dots \mu_n ^{\alpha_n}\mu_{i_1}\dots 
\mu_{i_k}) z_1^{\alpha_1} \dots z_{n} ^{\alpha_n} dz_{i_1,\dots, i_k}.
\end{equation}

In the classical case,  $\mu_1=\dots =\mu_n=\mu$ and
\begin{equation*}p_0(\omega)=\sum\limits _{i_1<\dots <i_k} \sum\limits _
{\alpha \in \mathbb{N}^n}c_\alpha ^{i_1\dots i_k} (b-\mu^{\alpha_1+\dots +\alpha_k +k})
z_1^{\alpha_1} \dots z_{n} ^{\alpha_n}dz_{i_1,\dots, i_k}. \end{equation*}
thus $\dim(ker\,p_0) >0$ if, and only if,
$b=\mu^{m}$, for some $m \in \mathbb{N}$, $m\geq k$.

In the generic case, since $\mu_i's$ have no relations, it  follows from (\ref{equacao123}) 
that \\ \mbox{$\dim(ker\,p_0) >0$} if, and only if,  $b=\mu_1 ^{m_1} \mu_2^{m_2}\dots \mu_n ^{m_n}$ 
where $\mu_j \in \mathbb{N}$, and there exists $j_1,\dots,j_k\in \{1,\dots  ,n\}$, 
such that \mbox{$m_{j_1},\dots,m_{j_k}\geq 1$.}

Finally, for the intermediary case, we have $\mu_1=\dots =\mu_r=\mu$, so that
\begin{equation*}
 p_0(\omega)=\sum\limits _{i_1<\dots <i_k} \sum\limits _{\alpha \in 
\mathbb{N}^n}c_\alpha ^{i_1\dots i_k}
 (b-\mu^{\alpha_1+\dots+\alpha_k}\mu_{r+1}^{\alpha_{r+1}}\dots \mu_n ^{\alpha_n}\mu_{i_1}\dots
 \mu_{i_k}) z_1^{\alpha_1} \dots z_{n} ^{\alpha_n}dz_{i_1,\dots, i_k}.
\end{equation*}

Since $\mu, \mu_{r+1},\dots ,\mu_n$ have no relations, we have
$\dim(ker\,p_0)>0$ if, and only if, $b=\mu^m \mu_{r+1}^{m_{r+1}}\dots \mu_n^{m_n}$ such that
$\mu_j \in \mathbb{N}$ for all $j=1,\dots ,n$, and:
\begin{enumerate}
 \item[0)] $m_1+m_2+\dots +m_r= 0$, and there exists $i_1,\dots ,i_k\geq 
r+1$ such that \\ $m_{i_1}\geq 1 ,\dots,  m_{i_k}\geq 1$ , or 
\item[1)] $m_1+m_2+\dots +m_r= 1$, and there exists $i_1,\dots ,i_{k-1}
\geq r+1$ such that \\$m_{i_1}\geq 1,\dots, m_{i_{k-1}}\geq 1$, or 
\\.\\.\\.
\item [k-1)] $m_1+m_2+\dots +m_r= k-1$, and there exists $i_1\geq 
r+1$ such that $m_{i_1}\geq 1$, or 
\item [k)] $m_1+m_2+\dots +m_r\geq k$.
\end{enumerate}\end{proof}

\subsection{Proof  of Theorem \ref{teo1}}
\begin{proof}
By construction,  we have that
a holomorphic section $s \in H^0(X, \Omega_X ^k \otimes L_{b})$ corresponds to a section
 $\tilde{s} \in H^0(W, \mathcal{O}_W ^{n \choose k})$, 
say $\tilde{s}=(g_{i_1\dots i_k})_{i_1<\dots < i_k}$, such that
$g_{i_1\dots i_k} \in \mathcal{O}_W$ satisfies
$$g_{i_1\dots i_k}(\mu_1 z_1,\dots ,\mu_n z_n)= \mu_{i_1}^{-1}\dots \mu_{i_k} ^{-1} 
b  g_{i_1\dots i_k}(z_1,\dots ,z_n),$$ for all ${i_1<\dots < i_k}.$
By Hartog's extension theorem, $\tilde{s}$ can be represented by its Taylor series
\begin{center}
$g_{i_1\dots  i_k}(z_1,\dots ,z_n)=\sum\limits _{\alpha \in \mathbb{N}^n} c_\alpha 
^{i_1\dots  i_k} z_1^{\alpha_1}\dots  z_n ^{\alpha_n}$, where $\alpha=(\alpha_1,
\alpha_2,\dots ,\alpha_n) \in \mathbb{N}^n$ .
\end{center}
Then
\begin{equation}\label{equa6}
c_{\alpha}^{i_1\dots  i_k} \mu_1 ^{\alpha_1} \mu_2 ^{\alpha_2}\dots  \mu_ n ^{\alpha_n}=
c_{\alpha}^{i_1\dots  i_k} \mu_{i_1}^{-1}\dots \mu_{i_k} ^{-1} b,
 \end{equation}
$\textrm{ where } \alpha =(\alpha_1, \alpha_2, \dots ,\alpha_n)\in \mathbb{N}^n.$\\ \\
\textbf{Classical case.} In this case $\mu_1=\dots = \mu_n=\mu$. Lemma \ref{le1} part $(i)$ implies that
 $b=\mu^{m}$ for some
$m\geq k$.
Therefore
$$
c_{\alpha}^{i_1\dots i_k}\mu^{|\alpha|}=c_{\alpha}^{i_1\dots i_k} \mu^{-k}\mu^{m} \textrm{ where } 
|\alpha|=\alpha_1+\dots +\alpha_n.
$$
Hence, if $ c_{\alpha}^{i_1\dots i_k}\neq 0$ then $|\alpha|=m-k$. It follows that  each $g_{i_1\dots i_k}$ 
is a homogeneous polynomial of degree $m-k$.
\\ \\
\textbf{Generic case.} If $X$ is generic, then by Lemma \ref{le1} part $(ii)$ we have
$$b=\mu_1 ^{m_1} \mu_2^{m_2}\dots \mu_n ^{m_n}$$  where $m_i\in \mathbb{N}$ 
and there exists $j_1,\dots,j_k\in \{1,\dots  ,n\}$, 
such that \mbox{$m_{j_0},\dots,m_{j_k}\geq 1$.} Then from (\ref{equa6}) we get
\begin{center}
$ 
c_{\alpha}^{i_1\dots  i_k} \mu_1 ^{\alpha_1} \mu_2 ^{\alpha_2}\dots  \mu_ n ^{\alpha_n}
=c_{\alpha}^{i_1\dots  i_k} 
\mu_{i_1}^{-1}\dots \mu_{i_k} ^{-1}\mu_1 ^{m_1} \mu_2 ^{m_2}\dots \mu_n ^{m_n}$ 
\end{center}
where $\alpha =(\alpha_1, \alpha_2, \dots ,\alpha_n)\in \mathbb{N}^n$.

Hence for each ${i_1< \dots < i_k}$ we have
\begin{equation}\label{generica}
g_{i_1\dots  i_k}(z_1,\dots  ,z_n)=c_{\alpha_0} ^{{i_1\dots  i_k}} z_{1}^{m_1}z_2 ^{m_2}
\dots z_n^{m_n}z_{i_1} ^{-1}\dots z_{i_k} ^{-1} .
\end{equation}
Since $\fol$ is nonsingular, we get that $m_{i_1}=\dots m_{i_k}=1$ for some ${i_1< 
\dots < i_k}$ 
and $m_i=0$ for all $i\in\{1,\dots, n\}\setminus\{i_{1}\dots i_k\}$. \\

So that we have $b=\mu_{i_1}\dots\mu_{i_k}$, $g_{i_{1}\dots i_k}$ is a constant
and $g_{j_1\dots j_k}=0$ for all ${j_1<\dots <j_k}\neq i_{1}<\dots <i_k$.
\\ \\
\textbf{Intermediary case.}
In this case $\mu_1=\dots =\mu_r$, then by Lemma \ref{le1} part $(iii)$ we have 
$$b=\mu  ^{m } \mu_{r+1}^{m_{r+1}}\dots \mu_n ^{m_n}$$ 
 Then from (\ref{equa6}) we get
\begin{center}
$c_{\alpha}^{i_1\dots  i_k} \mu^{\alpha_1+\dots +\alpha_r}\mu_{r+1}^{\alpha_{r+1}} 
\dots  \mu_ n ^{\alpha_n}=c_{\alpha}^{i_1\dots  i_k} \mu_{i_1}^{-1}\dots \mu_{i_k} ^{-1} 
\mu  ^{m } \mu_{r+1}^{m_{r+1}}\dots \mu_n ^{m_n} $\end{center}
where $\alpha =(\alpha_1, \alpha_2, \dots ,\alpha_n)\in \mathbb{N}^n.$

Hence, if $c_{\alpha}^{i_1\dots  i_k}\neq 0$ then  
$$\mu^{\alpha_1+\dots +\alpha_r}
\mu_{r+1}^{\alpha_{r+1}} \dots  \mu_ n ^{\alpha_n}= \mu_{i_1}^{-1}\dots \mu_{i_k} ^{-1} \mu  
^{m } \mu_{r+1}^{m_{r+1}}\dots \mu_n ^{m_n}$$

where $\alpha =(\alpha_1, \alpha_2, \dots ,\alpha_n)\in \mathbb{N}^n.$\\
If $i_1,\dots ,i_k \leq r$, we have   $\alpha_1+\dots +\alpha_k=m-k$, $\alpha_j=m_j$ 
for $j\geq r+1$ and \\

$g_{i_1\dots i_k}=\sum\limits _{\alpha \in \mathbb{N}^r} c_{\alpha}^{i_1\dots  i_k} z_1 
^{\alpha_1}\dots z_r ^{\alpha_r}z_{r+1} ^{m_{r+1}}\dots z_n ^{m_n}$ with $|\alpha|=m-k$,
where $\alpha=(\alpha_1,...,\alpha_r)$ and $|\alpha|=\alpha_1+\dots +\alpha_r$.
\\
If $i_1,\dots ,i_{k-1} \leq r$ and $i_k \geq r+1$ we have  
\\

$g_{i_1\dots i_k}=\sum\limits _{\alpha \in \mathbb{N}^r} c_{\alpha}^{i_1\dots  i_k} z_1 ^{\alpha_1}\dots 
z_r ^{\alpha_r}z_{r+1} ^{m_{r+1}}\dots z_n ^{m_n}z_{i_k} ^{-1}$ with $|\alpha|=m-k+1$,
where $\alpha=(\alpha_1,...,\alpha_r)$ and $|\alpha|=\alpha_1+\dots +\alpha_r$.

\begin{center}
 .\\.\\.
\end{center}

If $i_1,\dots ,i_{s} \leq r$ and $i_{s+1},\dots, i_k \geq r+1$ we have  
\\

$g_{i_1\dots i_k}=\sum\limits _{\alpha \in \mathbb{N}^r} c_{\alpha}^{i_1\dots  i_k} z_1 ^{\alpha_1}\dots 
z_r ^{\alpha_r}z_{r+1} ^{m_{r+1}}\dots z_n ^{m_n}z_{i_{s+1}} ^{-1}\dots z_{i_k} ^{-1}$ with $|\alpha|=m-s$,
where $\alpha=(\alpha_1,...,\alpha_r)$ and $|\alpha|=\alpha_1+\dots +\alpha_r$.

\begin{center}
.\\.\\.
\end{center}

If   $i_{1},\dots, i_k \geq r+1$ we have  
$$g_{i_1\dots i_k}=\sum\limits _{\alpha \in \mathbb{N}^r} c_{\alpha}^{i_1\dots  i_k} z_1 ^{\alpha_1}\dots 
z_r ^{\alpha_r}z_{r+1} ^{m_{r+1}}\dots z_n ^{m_n}z_{i_{1}} ^{-1}\dots z_{i_k} ^{-1}$$ with $|\alpha|=m$,
where $\alpha=(\alpha_1,...,\alpha_r)$ and $|\alpha|=\alpha_1+\dots +\alpha_r$.
We will divide the proof of intermediary case in four subcases:
 \\
\\ \\ 
1) $m=0$

In this case, $g_{i_1\dots i_k}=0$ if some $i_j\leq r$, 
and if   $i_{1},\dots, i_k \geq r+1$ we have  
$$g_{i_1\dots i_k}= c^{i_1\dots  i_k} z_{r+1} ^{m_{r+1}}\dots 
z_n ^{m_n}z_{i_{1}} ^{-1}\dots z_{i_k} ^{-1}$$ where $c^{i_1\dots  
i_k}\in \mathbb{C}$ is a constant. Note that $k\leq n-r$.

Since, $\fol$ is nonsingular, we get that $m_{j_1}=\dots=m_{j_k}=1$ 
for some $r< j_1<\dots< j_k\leq n$, and $m_i=0$ if $i\neq j_l$ 
for all $l=1,\dots,k$. Thus, $b=\mu_{j_1}\dots\mu_{j_k}$, $g_{j_1\dots j_k}=constant\neq 0$  and $g_{i_1\dots i_k}=0$
if ${i_1<\dots<i_k}\neq {j_1< \dots<j_k}$.  In this case the distribution is induced by a holomorphic
$k-$form of the type $\omega=dz_{j_1}\wedge \dots \wedge dz_{j_k}$ \\ \\ \\
2) $m=1$

Note that $m-s\geq 0 \Leftrightarrow s=0 \mbox{ or } s=1$, so $g_{i_1\dots i_k}=0$ if $i_1, i_2 \leq r$.
\\ 

If $i_1 \leq r$ and $i_2,\dots  \ i_k\geq r+1$ we have $$g_{i_1\dots i_k}=  c^{i_1\dots  i_k}  z_{r+1} ^{m_{r+1}}\dots z_n ^{m_n}z_{i_{2}} ^{-1}\dots z_{i_k}
 ^{-1}$$ where $c^{i_1\dots  i_k}$ is a constant.
\\

If $i_1,\dots  \ i_k\geq r+1$ we have $$g_{i_1\dots i_k}=\sum\limits _{\alpha \in \mathbb{N}^r} c_{\alpha}^{i_1\dots  i_k} z_1 ^{\alpha_1}\dots 
z_r ^{\alpha_r}z_{r+1} ^{m_{r+1}}\dots z_n ^{m_n}z_{i_{1}} ^{-1}\dots z_{i_k} ^{-1}$$ with $|\alpha|=1$,
where $\alpha=(\alpha_1,...,\alpha_r)$ and $|\alpha|=\alpha_1+\dots +\alpha_r$.
\\ \\ \\ 2.1)  $m=1,\, r=n-1$. 

Since $k\geq 2$, in this case there is distribution only if $k=2$.\\
If $i_1\leq r=n-1$ and $n=i_2\geq r=n-1+1$ then 

\begin{center}
$g_{i_1n}= c^{i_1 n}  z_{n} ^{m_{n}-1}$, where $c^{i_1 n }$ is a constant,
and $g_{i_1 i_2}=0$ in  all  other cases. 
\end{center}
 
Since $\fol$ is nonsingular, we have $m_n=1$, $b=\mu \mu_n$, and 
the distribution is induced by $k-$form of the type $\sum\limits_{i_1} ^{n} c^{i_1n}dz_{i_1}\wedge dz
_n$.
\\ \\ \\  
2.2)  $m=1,\, r=n-2$. 
 
In this case, there is ditribution only if $k=2$ or $k=3$.\\
\begin{itemize} \item $k=3$. \\ If $i_1\leq n-2=r$ and $i_2=n-1, i_3=n\geq r+1=n-1$  then 
$$g_{i_1 n-1 n}=c^{i_1 n-1 n}z_{n-1} ^{m_{n-1} } z_{n} ^{m_n} z_{n-1} ^{-1}  z_{n} ^ {{-1} }$$ where $c^{i_1 n-1 n }$ is a constant,
and $g_{i_1 i_2 i_3 }=0$ in  all  other cases. Since $\fol$ is nonsingular, $m_{n-1}=m_n=1$, $b=\mu \mu_{n-1}{\mu_n}$, and the distribution is induced
by a $3-$form of the type $\omega=\sum\limits_{i_1=1} ^{n-2}  c^{i_1 n-1 n } dz_{i_1} \wedge dz_{n-1} \wedge dz_n$.

\item $k=2$. \\
If $i_1\leq n-2$ and $i_2 \geq r+1=n-1$, then 
$$g_{i_1 i_2}= c^{i_1 i_2} z_{n-1} ^{m_{n-1}} z_n ^{m_n} z_{i_2} ^{-1}$$
\\ If $i_1=n-1, i_2 =n\geq r+1=n-1$ then 
$$g_{n-1 n}=\sum\limits _{\alpha \in \mathbb{N}^{n-2}} c_{\alpha}^{n-1 n} z_1 ^{\alpha_1}\dots 
z_{n-2} ^{\alpha_{n-2}}z_{n-1} ^{m_{n-1}}z_n ^{m_n} z_{n-1} ^{-1}z_{n} ^{-1}$$
 with $|\alpha|=1$,
where $\alpha=(\alpha_1,...,\alpha_{n-2})$ and $|\alpha|=\alpha_1+\dots +\alpha_{n-2}$.\\
We have three possibilities: \begin{itemize} \item [a)] $m_{n-1}=m_n=1$. In this case, 
$g_{i_1 n-1}= c^{i_1 n-1}  z_n $, $g_{i_1 n}= c^{i_1 n}  z_{n-1} $ if $i_1\leq n-2$, and 
$$g_{n-1 n}=\sum\limits _{\alpha \in \mathbb{N}^{n-2}} c_{\alpha}^{n-1 n} z_1 ^{\alpha_1}\dots 
z_{n-2} ^{\alpha_{n-2}}$$ with $|\alpha|=1$,
where $\alpha=(\alpha_1,...,\alpha_{n-2})$ and $|\alpha|=\alpha_1+\dots +\alpha_{n-2}$.
\item [b)] $m_{n-1}=1, m_n=0$. In this case, $g_{i_1 n-1}=constant$ if $i_1\leq n-2$ and $g_{i_1 i_2}=0$ in all other cases.

\item [c)] $m_{n-1}=0, m_n=1$. In this case, $g_{i_1 n}=constant$ if $i_1\leq n-2$ and $g_{i_1 i_2}=0$ in all other cases.
\\
\end{itemize} 
\end{itemize}
2.3) $m=1,\,2 \leq r=n-t \leq n-3$. 

If $i_1 \leq r=n-t$ and $i_2,\dots  \ i_k\geq r+1=n-t+1$ we have
$$g_{i_1\dots i_k}=  c^{i_1\dots  i_k}  z_{r+1} ^{m_{r+1}}\dots z_n ^{m_n}z_{i_{2}} ^{-1}\dots z_{i_k}
 ^{-1}$$ where $c^{i_1\dots  i_k}$ is a constant.
\\
If $i_1,\dots  \ i_k\geq r+1=n-t+1$ we have 
$$g_{i_1\dots i_k}=\sum\limits _{\alpha \in \mathbb{N}^r} c_{\alpha}^{i_1\dots  i_k} z_1 ^{\alpha_1}\dots 
z_r ^{\alpha_r}z_{r+1} ^{m_{r+1}}\dots z_n ^{m_n}z_{i_{1}} ^{-1}\dots z_{i_k} ^{-1}$$ with $|\alpha|=1$,
where $\alpha=(\alpha_1,...,\alpha_r)$ and $|\alpha|=\alpha_1+\dots +\alpha_r$.

Note that there is distribution only if $k\leq t+1$.\\
\begin{itemize} \item [a)] $k=t+1$.\\
If $i_1\leq r=n-k+1$ and $i_2,\dots, i_k \geq r+1=n-k+2$ then\\ $i_2=n-k+2, \dots, i_k=n$ and
$$g_{i_1, n-k+2, \dots, n}= c^{i_1, n-k+2 ,\dots, n} z_{n-k+2} ^{m_{n-k+2}} \dots z_n ^{m_n} z_{n-k+2} ^{-1}\dots z_{n}
 ^{-1}$$ where $ c^{i_1, n-k+2 ,\dots, n} $ is a constant, and $g_{i_1\dots i_k}=0$ in all other cases.

Since $\fol$ is nonsingular, $m_{n-k+2}=\cdots =m_n=1$, 
$b=\mu \mu_{n-k+2}\dots \mu_{n}$ and the distribution is induced by a constant $k-$form of the type $\omega=\sum \limits c^{i_1}dz_{i_1}\wedge dz_{n-k+2} \wedge \dots \wedge dz_{n}$,
where $c^{i_1}\in \mathbb{C}$ is a constant.
\item $k=t$\\

If $i_1\leq r=n-k$ and $i_2,\dots,i_k \geq r+1=n-k+1$ we have\\

$g_{i_1\dots i_k}=c ^{i_1\dots  i_k} z_{n-k+1} ^{m_{n-k+1}}\dots z_n ^{m_n}z_{i_{2}} ^{-1}\dots z_{i_k} ^{-1}$  where $c ^{i_1\dots  i_k} $ is a constant.
\\

If  $i_1,\dots,i_k \geq r+1=n-k+1$ we have $i_1=n-k+1 ,\dots,i_k=n $ and\\

$g_{i_1\dots i_k}=\sum\limits _{\alpha \in \mathbb{N} ^r} c_{\alpha} ^{i_1\dots  i_k} z_1 ^{\alpha_1}\dots 
z_r ^{\alpha_r}  z_{n-k+1} ^{m_{n-k+1}}\dots z_n ^{m_n}z_{n-k+1} ^{-1}\dots z_{n} ^{-1}$ with $|\alpha|=1$,
where $\alpha=(\alpha_1,...,\alpha_r)$ and $|\alpha|=\alpha_1+\dots +\alpha_r$.
We will separate into cases:
\begin{itemize}
\item [i)]$m_{n-k+1}=\dots=m_{n}=1$\\
In this case we have 

If $i_1\leq r=n-k$ and $i_2,\dots,i_k \geq r+1=n-k+1$ we have\\

$g_{i_1\dots i_k}=c ^{l}  z_{l}$  where $\{l\}=\{n-k+1,\dots ,n\}\setminus \{i_2,\dots i_k\}$ and  $c ^{l} $ is a constant.
\\

If  $i_1,\dots,i_k \geq r+1=n-k+1$ we have $i_1=n-k+1 ,\dots,i_k=n $ and\\

$g_{i_1\dots i_k}=\sum\limits _{i=1}  ^{ r} c ^i z_i $, where $c^i$ is constant for all $i$.\\

In this case, then we have linear distribution.

\item [ii)] If $m_{i}=1$ for all $n-k+1\leq i\leq n, i\neq n-k+l$ and $m_{n-k+l}=0$ we have \\

$g_{i_1 n-k+1\dots n-k+l-1, n-k+l+1 \dots i_n}=constant$ if $i_1\leq n-k$, and $g_{i_1\dots i_k}=0$ in all other cases. \\Then $b=\mu \mu_{n-k+1} \dots \mu_{n-k+l-1}\mu_{n-k+l+1} \dots \mu_{n}$, and 
the distribuition is induced by a $k-$form of the type \\$\omega=\sum\limits _{i =1} ^{n-k} c^{i } dz_{i }\wedge dz_{n-k+1} \wedge \dots dz_{n-k+l-1}\wedge dz_{n-k+l+1}\wedge\dots \wedge dz_{n}$ 
, where $c^i$ is constant for all $i$.

\end{itemize}
\item ${k < t}$\\
In this case,  there are $m_{j_1}=\dots m_{j_{k-1}}=1$, and $m_{j}=0$ for all $j\geq r+1, j\neq j_s, $ for all $s=1,\dots k-1$, $b=\mu \mu_{j_1}\dots mu_{j_{k-1}}$ and the distribution is induced by
a $k-$form of the type 
\\$\omega=\sum\limits _{i =1} ^{n-t} c^{i } dz_{i }\wedge dz_{j_1} \wedge  \dots \wedge dz_{j_{k-1}}$ 
, where $c^i$ is constant for all $i$.

\end{itemize}
Then the case $m=1$ is finished. It is easy see that if $k< m$ then there is no distribuition.\\
\vspace{0.5cm}
\\
3) $k=m>1$\\
Recall that: If $i_1,\dots ,i_{s} \leq r$ and $i_{s+1},\dots, i_k \geq r+1$,  where $s\leq k$, we have  
\\
$g_{i_1\dots i_k}=\sum\limits _{\alpha \in \mathbb{N}^r} c_{\alpha}^{i_1\dots  i_k} z_1 ^{\alpha_1}\dots 
z_r ^{\alpha_r}z_{r+1} ^{m_{r+1}}\dots z_n ^{m_n}z_{i_{s+1}} ^{-1}\dots z_{i_k} ^{-1}$ with $|\alpha|=m-s$,
where $\alpha=(\alpha_1,...,\alpha_r)$ and $|\alpha|=\alpha_1+\dots +\alpha_r$.\\
 
Suppose that $r<n-1$. In this case, since $\fol$ is nonsingular $m_l=0$ for all $l\geq r+1$, $b=\mu^m$ and the distribution is induced by a $k-$form of the type 
$\omega=\sum\limits _{i_1<\dots <i_k \leq r} c^{i_1 \dots i_k} dz_{i_1}\wedge \dots \wedge dz_{i_k}$.\\
Now, if $r=n-1$ then:\\
If $i_1,\dots, i_k\leq r=n-1$ 

$g_{i_1\dots i_k}= c^{i_1\dots  i_k}   z_{n} ^{m_n}$  
 where $c^ {i_1\dots  i_k}$ is a constant.\\
If $i_1,\dots, i_{k-1} \leq r=n-1$, $i_k=n$ \\
$g_{i_1\dots i_k}=z_n ^{m_n-1}\sum\limits _{i=1} ^{n-1} c ^{i} z_i $, where $c ^{i}$ is a constant for all $i$.\\
If $m_n=0$, then $g_{i_1\dots i_k}=constant$  for $i_1,\dots, i_k\leq r=n-1$, and $g_{i_1\dots i_k}=0$ in all other cases. Then the distribution is induced by a $k-$form of the type 
$\omega=\sum\limits _{i_1<\dots <i_k \leq r} c^{i_1 \dots i_k} dz_{i_1}\wedge \dots \wedge dz_{i_k}$. \\
If $m_n=1$, then $g_{i_1\dots i_k}=c^{i_1\dots  i_k}   z_{n}$ for $i_1,\dots, i_k\leq r=n-1$, and for  $i_1,\dots, i_{k-1} \leq r=n-1$, $i_k=n$ then 
$g_{i_1\dots i_k}= \sum\limits _{i=1} ^{n-1} c ^{i} z_i $, where $c ^{i}$ is a constant for all $i$.Then the distribution is induced by a linear $k-$form. 
\\ \\ \\
4)  $2 \leq m \leq k-1$\\ 
Suppose $m=k-\lambda, \, \lambda \in \mathbb{N},\, 1\leq \lambda \leq k-2$\\
Since $|\alpha|=m-s=k-\lambda-s$ then $|\alpha|\geq 0  \Leftrightarrow  s\leq k-\lambda$. Hence $g_{i_1\dots i_k}=0$ if $i_1,\dots , i_s \leq r$  and $s> k-\lambda$.\\
If $i_1,\dots , i_{k-\lambda-\gamma} \leq r $, $i_{k-\lambda-\gamma+1},\dots , i_k > r$, where $0	 \leq\gamma \leq m$ then 
\\

$g_{i_1\dots i_k}=\sum\limits _{\alpha \in \mathbb{N}^r} c_{\alpha}^{i_1\dots  i_k} z_1 ^{\alpha_1}\dots 
z_r ^{\alpha_r}z_{r+1} ^{m_{r+1}}\dots z_n ^{m_n}z_{i_{k-\lambda-\gamma+1}} ^{-1}\dots z_{i_k} ^{-1}$ with $|\alpha|=\gamma$,
where $\alpha=(\alpha_1,...,\alpha_r)$ and $|\alpha|=\alpha_1+\dots +\alpha_r$.\\
We will to consider $r=n-t$ where $1\leq t \leq n-2$ into separate cases .\\
\\ 
 4.1) $t=1$ \\
In this case, $i_{k-\lambda-\gamma+1},\dots , i_k >r=n-1 \Leftrightarrow \lambda=1, \gamma=0 \Rightarrow k=m+1$. Thus \\

$g_{i_1\dots i_k}=c^{i_1\dots i_k}z_{n} ^{m_n-1}$ if $i_1,\dots , i_{k-1} \leq r=n-1 $, $n=i_{k}> r=n-1$ and $g_{i_1\dots i_k}=0$ in 
all other cases. Since $\fol$ is nonsingular, we have $m_n=1$, $b=\mu^m \mu_{n}$, and the distribution is induced by
a $k-$form of the type 
\\$\omega=\sum\limits _{i_1<\dots < i_{k-1} \leq  n-1 }   c^{i_1,\dots , i_{k-1} }  dz_{i_1} \wedge  \dots \wedge dz_{i_{k-1}}\wedge d z_n$ 
, where $  c^{i_1,\dots , i_{k-1} }$ is a constant..
\\
4.2)$t=2$ \\
It is clear see that, in this case, does not have distribution if $\lambda \geq 3$.\\
If $\lambda=2$ we have $\gamma=0$, thus for $i_1, \dots , i_{k-2} \leq n-2, \, i_{k-1}=n-1, i_k=n >n-2$ we have  $g_{i_1\dots i_k}= c^{i_1 \dots i_{k-2}} z_{n-1} ^{m_{n-1}-1}z_{n} ^{m_{n}-1}$, where $ c^{i_1 \dots i_{k-2}}$ is a constant, and  $g_{i_1\dots i_k}=0$ in all other cases. Since $\fol$ is nonsingular, we have $m_{n-1}=m_{n}=1$, $b=\mu^m\mu_{n-1}\mu_{n}$, the distribution is induced by
a $k-$form of the type 
\\$\omega=\sum\limits _{i_1<\dots < i_{k-2} \leq  n-2 }   c^{i_1,\dots , i_{k-2} }  dz_{i_1} \wedge  \dots \wedge dz_{i_{k-2}}\wedge dz_{n-1}\wedge d z_n$ 
, where $  c^{i_1,\dots , i_{k-2} }$ is a constant.\\

If $\lambda=1$ we get $0\leq \gamma\leq 1$, thus for $i_1, \dots , i_{k-1} \leq r=n-2, \, i_k>r=n-2$ we have  $g_{i_1\dots i_k}= c^{i_1 \dots i_{k-1}} z_{n-1} ^{m_{n-1}}z_{n} ^{m_{n}}z_{i_k} ^{-1}$
and for $i_1, \dots , i_{k-2} \leq r=n-2, \, i_{k-1}=n-1,\, i_k=n>r=n-2$ we have  $g_{i_1\dots i_k}=\sum\limits _{\alpha \in \mathbb{N}^r} c_{\alpha}^{i_1 \dots i_{k}}z_1 ^{\alpha_1}\dots 
z_r ^{\alpha_r} z_{n-1} ^{m_{n-1}-1}z_{n} ^{m_{n}-1}$, with $|\alpha|=1$, and  $g_{i_1\dots i_k}=0$ in all other cases.\\
\begin{itemize}
\item If $m_{n-1}=m_n=1$. In this case  for $i_1, \dots , i_{k-1} \leq r=n-2, \, i_k=n-1>r=n-2$ we have $g_{i_1\dots i_{k-1} n-1}= c^{i_1 \dots i_{k-1}} z_{n} $,
for $i_1, \dots , i_{k-1} \leq r=n-2, \, i_k=n>r=n-2$ we have $g_{i_1\dots i_{k-1} n}= c^{i_1 \dots i_{k-1}} z_{n-1} $, and 
for $i_1, \dots , i_{k-2} \leq r=n-2, \, i_{k-1}=n-1,\, i_k=n>r=n-2$ we have  $g_{i_1\dots i_{k-2} n-1, n}=\sum\limits _{\alpha \in \mathbb{N}^r} c_{\alpha}^{i_1 \dots i_{k}}z_1 ^{\alpha_1}\dots 
z_r ^{\alpha_r}$, with $|\alpha|=1$. Thus the distribution is linear.
 
\item If $m_{n-1}=1, \, m_n=0$, then $g_{i_1\dots i_{k-1} n-1}=constant$ if $i_1,\dots i_{k-1}\leq n-2$ and $g_{i_1\dots i_{k}}=0$ in all other cases.

So the  distribution is induced by
a $k-$form of the type 
\\$\omega=\sum\limits _{i_1<\dots < i_{k-1} \leq  n-2 }   c^{i_1,\dots , i_{k-1} }  dz_{i_1} \wedge  \dots \wedge dz_{i_{k-1}}\wedge dz_{n-1} $ 
, where $  c^{i_1,\dots , i_{k-1} }$ is a constant.

\item If $m_{n-1}=0, \, m_n=1$, then $g_{i_1\dots i_{k-1} n}=constant$ if $i_1,\dots i_{k-1}\leq n-2$ and $g_{i_1\dots i_{k}}=0$ in all other cases.

So the  distribution is induced by
a $k-$form of the type 
\\$\omega=\sum\limits _{i_1<\dots < i_{k-1} \leq  n-2 }   c^{i_1,\dots , i_{k-1} }  dz_{i_1} \wedge  \dots \wedge dz_{i_{k-1}}\wedge dz_{n} $ 
, where $  c^{i_1,\dots , i_{k-1} }$ is a constant.

 \end{itemize}
4.3) $r=n-t, \, 2<t<n-1$.

Recall that $m=k-\lambda$ where $1\leq \lambda \leq k-2$, and that $g_{i_1\dots i_k}=0$ if $i_1,\dots, i_s\leq r=n-t$ and $s>k-\lambda$.\\
It is clear see that if  $\lambda > t$ there is no distribution.\\
\begin{itemize} 
\item [i)] Suppose $\lambda=t$, so for $i_1, \dots, i_{k-\lambda}\leq n-t$,
$i_{k-\lambda+1}=n-t+1, \dots ,i_k=n >n-t$ we have \\
$
g_{i_1\dots i_{k-\lambda},n-t+1\dots n}= c^{i_1\dots i_{k-\lambda},n-t+1\dots n} z_{n-t+1} ^{m_{n-t+1}-1}\dots z_{n} ^{m_{n}-1} $, and 
$g_{i_1\dots i_{k}}=0$ in all other cases. Since the distribution is nosingular we have $m_{n-t+1}=\dots=m_n=1$, $b=\mu^m \mu_{n-t+1}\dots \mu_n$, 
$ g_{i_1\dots i_{k-\lambda},n-t+1\dots n}= constant$ for  $i_1, \dots, i_{k-\lambda}\leq n-t$,  the  distribution is induced by
a $k-$form of the type \\

\begin{center}
$\omega=\sum\limits _{i_1<\dots < i_{k-\lambda} \leq  n-t }   c^{i_1\dots i_{k-\lambda},n-t+1\dots n }  dz_{i_1} \wedge  \dots \wedge dz_{i_{k-\lambda}}\wedge dz_{n-t+1}\wedge\dots \wedge dz_n $ 
, where $  c^{i_1\dots i_{k-\lambda}},n-t+1\dots n$ is a constant.
\end{center}

\item [ii)] Suppose now that $\lambda=t-a$, where $a\in \mathbb{N}$, $1\leq a \leq t-1$. In this case, we get \\
For each $0\leq \gamma \leq min\{a, m\}$, if $i_1,\dots \ i_{k-\lambda-\gamma} \leq r$, $i_{k-\lambda-\gamma+1},\dots ,i_k >r$ we have 
$g_{i_1\dots i_k}=\sum\limits _{\alpha \in \mathbb{N}^r} c_{\alpha}^{i_1\dots  i_k} z_1 ^{\alpha_1}\dots 
z_r ^{\alpha_r}z_{r+1} ^{m_{r+1}}\dots z_n ^{m_n}z_{i_{k-\lambda-\gamma+1}} ^{-1}\dots z_{i_k} ^{-1}$ with $|\alpha|=\gamma$,
where $\alpha=(\alpha_1,...,\alpha_r)$ and $|\alpha|=\alpha_1+\dots +\alpha_r$, and 
$g_{i_1\dots i_{k}}=0$ in all other cases.

\begin{itemize} 
\item  [a)] $\lambda\leq t-2$. Since $\fol$ is nonsingular then     there are $j_1,\dots ,j_{\lambda}>r$ such that $m_{j_1}=\dots =m_{j_{\lambda}}=1, m_i=0 $ if $i>r, i\neq j_l \forall l=1,\dots,\lambda$,
 $g_{i_1\dots i_{k-\lambda}j_1\dots j_{\lambda}}=constant$ for $i_1,\dots \ i_{k-\lambda } \leq r$
and $g_{i_1\dots i_{k}}=0$ in all others cases. Thus the  distribution is induced by
a $k-$form of the type \\

\begin{center}
$\omega=\sum\limits _{i_1<\dots < i_{k-\lambda} \leq  r }   c^{i_1\dots i_{k-\lambda},j_1\dots j_{\lambda} }  dz_{i_1} \wedge  \dots \wedge dz_{i_{k-\lambda}}\wedge dz_{j_1}\wedge\dots \wedge dz_{j_{\lambda}}$ 
, where $  c^{i_1\dots i_{k-\lambda}j_1\dots j_{\lambda}}$ is a constant.
\end{center}

\item [b)] $\lambda=t-1$, then $a=1$ and $min\{a,m\}=1$ .\\

For $i_1,\dots \ i_{k-\lambda } \leq r$, $i_{k-\lambda+1},\dots ,i_k >r$ we have \\
$g_{i_1\dots i_k}= c ^{i_1\dots  i_k}  z_{r+1} ^{m_{r+1}}\dots z_n ^{m_n} z_{i_{k-\lambda +1}} ^{-1}\dots z_{i_k} ^{-1}$,\\

For $i_1,\dots \ i_{k-\lambda-1 } \leq r$, $i_{k-\lambda},\dots ,i_k >r$ we have \\
 $g_{i_1\dots i_k}=\sum\limits _{\alpha \in \mathbb{N}^r} c_{\alpha}^{i_1\dots  i_k} z_1 ^{\alpha_1}\dots 
z_r ^{\alpha_r}z_{r+1} ^{m_{r+1}-1}\dots z_n ^{m_n-1} $ with $|\alpha|=1$,
where $\alpha=(\alpha_1,...,\alpha_r)$ and $|\alpha|=\alpha_1+\dots +\alpha_r$. 
\begin{itemize}
\item [i)] $m_{r+1}=\dots=m_{n}=1$. In this case, for $i_1,\dots \ i_{k-\lambda } \leq r$, $\{i_{k-\lambda+1},\dots ,i_k\}=\{r+1,\dots, n\}\setminus \{l\} $ we have
$$g_{i_1\dots i_k}= c ^{i_1\dots  i_k}  z_l,$$ where $c ^{i_1\dots  i_k}$ is a constant.

For $i_1,\dots \ i_{k-\lambda-1 } \leq r$, $\{i_{k-\lambda},\dots ,i_k\}=\{r+1,\dots,n\}$ we have
 $$g_{i_1\dots i_k}=\sum\limits _{\alpha \in \mathbb{N}^r} c_{\alpha}^{i_1\dots  i_k} z_1 ^{\alpha_1}\dots 
z_r ^{\alpha_r} $$ with $|\alpha|=1$,
where $\alpha=(\alpha_1,...,\alpha_r)$ and $|\alpha|=\alpha_1+\dots +\alpha_r$. Thus the distribution is linear.

\item [ii)] $m_h=0$ for some $h\geq r+1$, and $m_j=1 \, \forall  j\geq r+1, j\neq h$. In this case for 
$i_1,\dots \ i_{k-\lambda } \leq r$, $\{i_{k-\lambda+1},\dots ,i_k\}=\{r+1,\dots, n\}\setminus \{h\} $ we have
 $g_{i_1\dots i_k}= constant$, and $g_{i_1\dots i_{k}}=0$ in all others cases. 
Thus the  distribution is induced by
a $k-$form of the type 
 
$$\omega=\sum\limits c^{i_1\dots i_{k}}  dz_{i_1} \wedge  \dots \wedge dz_{i_k},$$
where $c^{i_1\dots i_{k}}$  is a constant,  $i_1,\dots \ i_{k-\lambda } \leq r$ and  $\{i_{k-\lambda+1},\dots ,i_k\}=\{r+1,\dots, n\}\setminus \{h\} $.

\end{itemize}

\end{itemize}
\end{itemize}
\end{proof}

\section{Proof  of Theorem \ref{generic}}
Recall   the Theorem \ref{generic}\\
\textbf{Theorem 1.3}
\textit{All holomorphic distribution of codimension $k$  (possibly singular) on a generic Hopf manifold of dimension at least  three are induced by a monomial $k$-form. }
\begin{proof} In fact,  follows directly from  proof of Theorem \ref{teo1} equation (\ref{generica}) that the distribution is induced by a monomial
$k-$form of the type $$\sum\limits _{i_1<\dots <i_k} g_{i_1\dots i_k}dz_{i_1\dots i_k}$$ where $g_{i_1\dots i_k}$ is a monomial .

\end{proof}
 
\vspace{3cm}

\noindent
\textbf{Acknowlegments.} The author is grateful to Maur\'icio Corr\^ea  and Arturo Fernández Pérez by suggest the theme and also
by the interesting conversations.

\end{document}